\documentclass[reqno]{amsart}
\usepackage{amsfonts}
\usepackage{graphicx}
\usepackage{amscd}
\usepackage{amsmath}
\usepackage{amssymb}
\usepackage[latin1]{inputenc}
\usepackage{amssymb,amsmath}
\usepackage{amsfonts}
\usepackage{graphicx,color}
\usepackage{amsthm}
\usepackage{graphicx}
\usepackage{fancyhdr}
\usepackage{hyperref}
\usepackage{color}
\setcounter{MaxMatrixCols}{30}
\theoremstyle{plain}

\newtheorem{theorem}{\bf Theorem}
\newtheorem{corollary}{\bf Corollary}

\newtheorem{lemma}{\bf Lemma}

\newtheorem{proposition}{\bf Proposition}
\newtheorem{remark}{Remark}

\numberwithin{equation}{section}

\newcommand\dv{\mathrm{div}}
\newcommand\dMt{\mathrm{dv_{g(t)}}}
\newcommand\dM{\mathrm{dv_g}}
\newcommand\dnt{\mathrm{d\sigma_{g(t)}}}
\newcommand\dn{\mathrm{d\sigma_g}}
\newcommand\dg{\mathrm{dv_{g_0}}}
\newcommand\db{\mathrm{d\sigma_{g_0}}}
\begin{document}

\title[On Eigenvalue Generic Properties of the Laplace-Neumann Operator]{On Eigenvalue Generic Properties of the Laplace-Neumann Operator}
\author{Jos\'e N.V. Gomes$^1$}
\author{Marcus A.M. Marrocos$^2$}

\address{$^1$Departamento de Matem\'atica, Universidade Federal do Amazonas, Av. Rodrigo Oct\'avio, 6200, 69080-900 Manaus, Amazonas, Brazil}
\address{$^2$CMCC-Universidade Federal do ABC, Av. dos Estados, 5001, 09210-580 Santo Andr\'e, S\~ao Paulo, Brazil.}

\email{$^1$jnvgomes@gmail.com; jnvgomes@pq.cnpq.br}
\email{$^2$marcus.marrocos@ufabc.edu.br}

\urladdr{$^1$https://ufam.edu.br; http://ppgm.ufam.edu.br}
\urladdr{$^2$http://cmcc.ufabc.edu.br}

\thanks{$^1$Partially supported by grant 202234/2017-7, Conselho Nacional de Desenvolvimento Cient\'ifico e Tecnol\'ogico (CNPq), of
the Ministry of Science, Technology and Innovation of Brazil}
\thanks{$^2$Partially supported by grant 2016/10009-3, São Paulo Research Foundation (FAPESP)}

\keywords{Laplace-Neumann; Hadamard type formulas; Eigenvalues}

\subjclass[2010]{Primary 47A75; Secondary 35P05, 47A55}

\begin{abstract}
We establish the existence of analytic curves of eigenvalues for the Laplace-Neumann operator through an analytic variation of the metric of a compact Riemannian manifold $M$ with boundary by means of a new approach rather than Kato's method for unbounded operators. We obtain an expression for the derivative of the curve of eigenvalues, which is used as a device to prove that the eigenvalues of the Laplace-Neumann operator are generically simple in the space $\mathcal{M}^k$ of all $C^k$ Riemannian metrics on $M$. This implies the existence of a residual set of metrics in $\mathcal{M}^k$, which make the spectrum of the Laplace-Neumann operator simple. We also give a precise information about the complementary of this residual set, as well as about the structure of the set of the deformation of a Riemannian metric which preserves double eigenvalues.
\end{abstract}
\maketitle

\section{Introduction}
In her seminal work Uhlenbeck~\cite{Uhlenbeck} proved groundbreaking results on generic properties for eigenvalues and eigenfunctions of the Laplace-Beltrami operator $\Delta_{g}$ on a closed $n$-dimensional Riemannian manifold $(M^n,g)$. From a qualitative point of view, one of the most beautiful results in \cite{Uhlenbeck} is the celebrated Theorem~8 asserting that the set of all $C^k$~Riemannian metrics $g$ for which $\Delta_g$ has simple spectrum is residual in the separable Banach space $\mathcal{M}^k$ of all $C^{k}$~Riemannian metrics on $M^n$, for any $2\leq k <\infty$ equipped with the $C^k$ topology. Over the last four decades, similar results were obtained in various directions. We refer to \cite{ahmad,henry,Marrocos} and the references therein for background on this subject.

In line with this theme, Micheletti and Pistoia~\cite[Theorem 4.1]{Micheletti} have proposed a sufficient condition for the set of the deformations of a Riemannian metric $g$ on $M^n$, which preserve the multiplicity $m\geq 2$ of a fixed eigenvalue $\lambda(g)$ associated with $g$, to be locally a submanifold of codimension $\frac{1}{2}m(m +1)-1$ inside the Banach space $\mathcal{S}^{2,k}$ of all $C^k$~symmetric covariant $2$--tensors on $M$. They proved that such a condition is easily fulfilled when $n=2$ and $m=2$, see \cite[Theorem 4.3]{Micheletti}. Explicit examples were given, which are in accordance with their results. Shortly after, Teytel defined a notion of \emph{meager codimension} in an infinite-dimensional separable Banach space (see \cite[Section 2]{teytel}) that can be used to give a precise information about the set of metrics which the Laplacian has at least one eigenvalue with multiplicity greater than one. The crucial step in approach of Teytel has been to impose a condition, which is closely related to the strong Arnold hypothesis for double eigenvalues but significantly easier to check. We refer to \cite{teytel} for a background on the strong Arnold hypothesis.

Nevertheless very little work has been done so far to address issues about generic properties of eigenvalues and eigenfunctions of the Laplace-Beltrami operator on compact Riemannian manifolds, subjected to boundary Neumann conditions. To shorten notation, it hereinafter will refer as Laplace-Neumann operator. The single and most significant reason for this, is due to the difficulty of dealing with normal derivatives of functions in boundary when $g$ varies through $\mathcal{M}^k$. Furthermore, even with the help of important results from perturbation theory of linear operators, usually available for the Dirichlet case, the Laplace-Neumann operator requires own treatment.

Our purpose is to study some cases of generic properties for the Laplace-Neumann operator on compact manifolds. It follows that every metric $g\in\mathcal{M}^{k}$ determines a sequence
\begin{equation*}
0 = \mu_{0}(g) < \mu_1(g)\leq\mu_2(g)\leq \cdots\leq\mu_k(g)\leq \cdots
\end{equation*}
of eigenvalues of $\Delta_{g}$ counted with their multiplicities. We regard each eigenvalue $\mu_{k}(g)$ as a function of $g$ in $\mathcal{M}^{k}$.

In Section~\ref{sec existence}, we ensuring the existence of eigenvalue-curves given by a real analytic one-parameter family of Riemannian metrics $g(t)$ on $M$. As is well-known, these curves were already obtained in the Dirichlet case by Berger~\cite{Berger} using the perturbation theory for linear operators of Kato~\cite{Kato}. However, we present a new approach to prove this existence result, we believe that this technique is of independent interest. Our strategy focuses on reducing the infinite-dimensional problem to a finite-dimensional one and then use the Kato Selection theorem, as an optional device. To reduce the dimension we shall use the method of Liapunov-Schmidt, along the same lines of thinking as in~\cite{Marrocos}. After all, we will are in a position to prove our existence result, see Proposition~\ref{thmExistence}.

Thanks to Proposition~\ref{thmExistence}, it makes sense to get a formula of Hadamard type for the eigenvalues of the Laplace-Neumann operator, see Lemma~\ref{pro1}. Now, we begin by properly stating our results. Let $\lambda_0$ be an eigenvalue of $\Delta_{g_0}$ with multiplicity $m(\lambda_0)$, we recall that there are a positive number $\epsilon_{\lambda_0, g_0}$ and a neighborhood $\mathcal{V}_{\epsilon}$ in $\mathcal{M}^k$, such that for all $g\in \mathcal{V}_{\epsilon}$ one has
\begin{equation}\label{kato-continuity}
\sum_{\{|\lambda-\lambda_0|<\epsilon_{\lambda_0, g_0}\}\cap spec(\Delta_g)}m(\lambda)=m(\lambda_0).
\end{equation}
Indeed, equation~\eqref{kato-continuity} is a consequence of the continuity of a finite system of eigenvalues, see \cite[Section 5, Chapter 4]{Kato}. In this setting, we prove the following generic result.

\begin{theorem}\label{thm1}
Let $M^n$, $n\geq2$, be a compact oriented smooth manifold. Take $g_0$ in $\mathcal{M}^k$, and let $\lambda$ be an eigenvalue of the Laplace-Neumann operator $\Delta_{g_0}$ of multiplicity $m\geq2$. Take a positive number $\epsilon_{\lambda,g_0}$ and a neighborhood $\mathcal{V}_{\epsilon}$ of $g_0$ in $\mathcal{M}^k$ as in~\eqref{kato-continuity}. Then for each open neighborhood $\mathcal{U}\subset \mathcal{V}_{\epsilon}$ of $g_0$ there is $g\in\mathcal{U}$ such that all eigenvalues $\lambda(g)$ of $\Delta_g$ with $|\lambda(g)-\lambda|<\epsilon_{\lambda,g_0}$ are simple.
\end{theorem}

In what follows $\mathcal{D}$ stands for the set of all $g\in\mathcal{M}^k$ such that the eigenvalues of the Laplace-Neumann operator $\Delta_g$ are all simple. Each $g\in\mathcal{D}$ can be obtained as a generic member of a differentiable family of self-adjoint operators $A(q)$ indexed by a parameter $q\in\mathcal{M}^k$, see Section~\ref{SectionProofThm2}.

\begin{corollary}\label{CR}
The set $\mathcal{D}$ is residual in $\mathcal{M}^k$.
\end{corollary}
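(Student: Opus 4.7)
The plan is to realize $\mathcal{D}$ as a countable intersection of open dense subsets of $\mathcal{M}^k$ and invoke the Baire category theorem. For each integer $N\geq 1$ I define
\[
S_N := \{g\in\mathcal{M}^k : \mu_0(g) < \mu_1(g) < \cdots < \mu_{N-1}(g)\},
\]
the set of metrics whose first $N$ eigenvalues (counted with multiplicity) are pairwise distinct, equivalently the set of metrics for which the first $N$ eigenvalues are simple. Since $g\in\mathcal{D}$ iff $\mu_j(g)<\mu_{j+1}(g)$ for every $j\geq 0$, one has $\mathcal{D}=\bigcap_{N\geq 1} S_N$, so it is enough to show that each $S_N$ is open and dense.

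Openness of $S_N$ rests on the continuity of each map $g\mapsto \mu_j(g)$ in the $C^k$-topology. This follows from the min-max characterization
\[
\mu_j(g)=\inf_{\substack{V\subset H^1(M)\\ \dim V=j+1}}\sup_{u\in V\setminus\{0\}}\frac{\int_M|\nabla u|_g^2\,\dM}{\int_M u^2\,\dM},
\]
since the Rayleigh quotient depends continuously on $g$ for each fixed $u\in H^1(M)$. The strict inequalities cut out by $S_N$ are then preserved under sufficiently small $C^k$-perturbations.

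The main work is density, which I would attack by induction on the defect
\[
d(g):=\#\{\,0\leq j\leq N-2 : \mu_j(g)=\mu_{j+1}(g)\,\}.
\]
Fix $g_0\in\mathcal{M}^k$ and an arbitrary $C^k$-neighborhood $U$ of $g_0$. If $d(g_0)=0$ there is nothing to do; otherwise pick such an index $j$, set $\lambda=\mu_j(g_0)$, and note that its multiplicity $m$ is at least $2$. Theorem \ref{thm1} produces a metric $g_1\in U$, arbitrarily close to $g_0$ in $C^k$, such that every eigenvalue of $\Delta_{g_1}$ lying in a small neighborhood of $\lambda$ is simple. By taking this perturbation small enough, continuity of eigenvalues guarantees that every previously simple eigenvalue below $\mu_{N-1}(g_0)+1$ remains simple and isolated, while the total multiplicity inside every other spectral cluster of $\Delta_{g_0}$ cannot grow. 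Therefore $d(g_1)<d(g_0)$, and after at most $d(g_0)$ iterations one obtains $g\in U\cap S_N$.

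The only delicate point in this scheme is the joint smallness control in the iteration: when we split one cluster we must make sure the perturbation does not accidentally merge previously separated eigenvalues, nor push the newly produced simple eigenvalues into a neighboring cluster. Both are handled by requiring each perturbation to be much smaller than the pre-existing spectral gaps outside the cluster currently being split, so that the only spectral data that moves noticeably is the cluster $\lambda$ itself. Once openness and density of every $S_N$ are established, $\mathcal{D}=\bigcap_N S_N$ is a countable intersection of open dense subsets of the Baire space $\mathcal{M}^k$, and hence is residual.
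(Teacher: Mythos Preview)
Your argument is correct and follows essentially the same strategy as the paper: write $\mathcal{D}$ as a countable intersection of open dense sets (the paper indexes by an eigenvalue threshold $\mathcal{D}_m=\{g:\text{all eigenvalues }\lambda\le m\text{ are simple}\}$ rather than by the number of eigenvalues, but this is immaterial) and invoke Baire. The paper obtains openness from Proposition~\ref{tcaln} rather than the min--max characterization, and for density it simply appeals to the splitting result (your Theorem~\ref{thm1}); your explicit induction on the defect $d(g)$ just spells out what the paper leaves implicit.
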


We obtain our results by using Hadamard type formulas for the eigenvalues (see Lemma~\ref{pro1}), which is more appropriate to the Laplace-Neumann case. Moreover, such formulas give us, without much effort, the optimal tool to apply Teytel's approach~\cite{teytel} from which we prove:

\begin{theorem} \label{thmManifold}
Let $M^n$, $n\geq2$, be a compact oriented smooth manifold.
\begin{enumerate}
\item The set $\mathcal{M}^k\backslash\mathcal{D}$ has meager codimension $2$ in $\mathcal{M}^k.$
\item Take $g_0\in\mathcal{M}^k$, and let $\lambda$ be an eigenvalue of the Laplace-Neumann operator $\Delta_{g_0}$ of multiplicity $2$. Then, in a neighborhood of $g_0$, the set of all $g\in\mathcal{M}^k$ such that $\Delta_g$ admits an eigenvalue $\lambda(g)$ of multiplicity $2$ near $\lambda$ form a submanifold of meager codimension $2$ in $\mathcal{S}^{2,k}$.
\item Consider the same set up as Item $(2)$. Then, in a neighborhood of $g_0$, the set of all $g\in\mathcal{M}^k$ which preserves double eigenvalues, i.e., $\Delta_g$ admits an eigenvalue $\lambda(g)$ of multiplicity $2$ such that $\lambda(g)=\lambda(g_0)$, form a submanifold of meager codimension $3$ in $\mathcal{S}^{2,k}$.
\end{enumerate}
\end{theorem}

\section{Preliminaries}
Let $M^n$, $n\geq2$, be a compact oriented $n$-dimensional smooth manifold with boundary $\partial M$, and let $g\in\mathcal{M}^k$. We consider the inner product $\langle T,S\rangle=\mathrm{tr}\big(TS^{*}\big)$ induced by $g$ acting on the space of $(0,2)$--tensors on $M$, where $S^*$ denotes the adjoint tensor of $S$. Clearly, in local coordinates we have
\begin{eqnarray*}\langle T,S\rangle = g^{ik}g^{jl}T_{ij}S_{kl}.
\end{eqnarray*}
Furthermore, for $f\in C^\infty(M)$ we have $\Delta_gf=\langle\nabla^2f,g\rangle$ where $\nabla^2f=\nabla df$ is the Hessian of $f$. We recall that  each $(0,2)$--tensor $T$ on $(M,g)$ can be associated to a unique $(1,1)$--tensor by $g(T(X),Y) := T(X,Y)$ for all $X,Y\in\mathfrak{X}(M)$. We shall slightly abuse notation here by writing the letter $T$ to indicate this $(1,1)$--tensor. So, we can consider the $(0,1)$--tensor given by
\begin{equation*}
(\dv T)(v)(p) = \mathrm{tr}\big(w \mapsto (\nabla_w T)(v)(p)\big)
\end{equation*}
where $p\in M$ and $v\in T_pM.$

Before deriving our main results we need a well-known lemma which will be crucial in the sequel.
\begin{lemma}\label{lem1}
Let  $T$ be a symmetric $(0,2)$--tensor on a Riemannian manifold $(M,g)$ and $\varphi$ a smooth function  on $M$. Then we have
\begin{equation*}
\dv (T(\varphi Z))= \varphi\langle \dv T,Z\rangle + \varphi\langle \nabla Z, T\rangle + T(\nabla\varphi ,Z),
\end{equation*}
for each $Z\in\mathfrak{X}(M)$, and we are considering the duality $(\dv T)(Z)=\langle \dv T, Z\rangle$.
\end{lemma}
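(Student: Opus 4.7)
The plan is a direct computation via the tensorial Leibniz rule, using the trace characterization of divergence recalled in Section~2. I view $T(\varphi Z)$ as the vector field obtained by applying the $(1,1)$-tensor associated with $T$ to $\varphi Z$, so that $\dv\bigl(T(\varphi Z)\bigr)(p)=\mathrm{tr}\bigl(w\mapsto\nabla_w(T(\varphi Z))(p)\bigr)$.

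First, I would apply the product rule in two stages, writing
\begin{equation*}
\nabla_w(T(\varphi Z)) = (\nabla_w T)(\varphi Z) + T(\nabla_w(\varphi Z)) = \varphi(\nabla_w T)(Z) + w(\varphi)T(Z) + \varphi T(\nabla_w Z),
\end{equation*}
and then take the trace in $w$ of each term. The first term yields $\varphi\langle\dv T,Z\rangle$ straight from the definition of $\dv T$. For the second term, working in an orthonormal frame $\{e_i\}$ at a point one obtains $\sum_i g(\nabla\varphi,e_i)\,g(T(Z),e_i)=g(\nabla\varphi,T(Z))$, which equals $T(\nabla\varphi,Z)$ by the symmetry of $T$.

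The only piece that needs a brief observation is the third. The key identity is
\begin{equation*}
\mathrm{tr}\bigl(w\mapsto T(\nabla_w Z)\bigr) = \langle \nabla Z, T\rangle,
\end{equation*}
which follows from a one-line orthonormal-frame computation together with the self-adjointness of $T$; in coordinates both sides equal $\sum_{i,j}T_{ij}(\nabla Z)_{ij}$, matching the local expression for the inner product on $(0,2)$-tensors recorded in Section~2. Multiplying by $\varphi$ and summing the three contributions produces the claimed formula. I do not foresee a genuine obstacle; the only delicate point is to juggle the two incarnations of $T$, as a symmetric $(0,2)$-tensor and as a self-adjoint $(1,1)$-tensor, and to invoke the symmetry at the right moment.
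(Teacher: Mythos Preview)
Your argument is correct: the Leibniz rule applied to $\nabla_w(T(\varphi Z))$ followed by tracing against an orthonormal frame yields exactly the three terms, and your use of the symmetry of $T$ to identify $g(\nabla\varphi,T(Z))=T(\nabla\varphi,Z)$ and $\sum_i T(\nabla_{e_i}Z,e_i)=\langle\nabla Z,T\rangle$ is the right place to invoke it. The paper does not actually prove this lemma---it simply cites \cite{jngbar}---so there is no in-paper argument to compare against; your direct computation is precisely the standard one.
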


Let $t\mapsto g(t)$ be a smooth variation of $g$ such that $(M,g(t), \dMt)$ is a Riemannian manifold, where $\dMt$ is the volume element of $g(t)$, and let $\dnt$ be the volume element of $g(t)$ restricted to $\partial M$. Denote by $H$ a $(0,2)$--tensor given by $H_{ij}=\frac{d}{dt}\big|_{t=0}g_{ij}(t)$ and
set $h=\langle H,g\rangle$. Similarly  $\tilde{h}$ stands for the trace of the $(0,2)$--tensor $\tilde{H}$ induced by the derivative of $g(t)$ restricted to $\partial M$.  It is easily seen that
\begin{equation*}
\frac{d}{dt}\dMt=\frac{1}{2}h\dM \quad \mbox{and} \quad  \frac{d}{dt}\dnt = \frac{1}{2}\tilde{h}\dn.
\end{equation*}

Since there is no danger of confusion, we will also write $\langle \,, \rangle$ to indicate the metric $g(t)$. Given $X,Y\in \mathfrak{X}(M)$ we can write $X=g^{ij}(t)x_i(t)\partial_j$ and $Y=g^{kl}(t)y_k\partial_l$, with $x_i(t)=\langle X,\partial_i\rangle$ and $y_k(t)=\langle Y,\partial_k\rangle$.

For ease of notation, let $\dot{X}:=g^{ij}x_i'\partial_j$ and $\dot{Y}:=g^{kl}y_k'\partial_l$, with $x'_i(t)=\frac{d}{dt}x_i$ and $y'_i(t)=\frac{d}{dt}y_i$. Then for every $X,Y\in \mathfrak{X}(M)$ and every $f, l\in C^\infty(M)$, the following properties can be verified:
\vskip .2cm
\begin{itemize}
\item[(P1)] \label{eq1lem2}$\frac{d}{dt}\langle X,Y\rangle =-H(X,Y)+\langle \dot{X},Y \rangle+\langle X,\dot{Y} \rangle$
\item[(P2)] \label{eq2lem2}$\frac{d}{dt}\langle \nabla_t f,\nabla_t l\rangle=-H(\nabla f,\nabla l)$
\item[(P3)] \label{eq3lem2}$\frac{d}{dt}\langle \nu_t,\nabla_t l(t)\rangle=-H(\nu,\nabla l)+\frac{1}{2}H(\nu,\nu)\langle\nu,\nabla l\rangle+\langle\nu,\nabla l'\rangle,$
\end{itemize}
\vskip .1cm
where $\nu_t=\frac{\nabla_t f}{|\nabla_t f|}$ and $\nabla_t$ means the gradient with respect to $g(t).$ Indeed,
\begin{eqnarray*}
\begin{split}
\frac{d}{dt}\langle X,Y\rangle &=\frac{d}{dt}\big(g^{ij}(t)x_i(t)y_j(t)\big)\\
&=-g^{ik}H_{kl}g^{lj}x_iy_j+g^{ij}x_i'y_j+g^{ij}x_iy_j'\\
&= -H(X,Y)+\langle \dot{X},Y \rangle+\langle X,\dot{Y} \rangle
\end{split}
\end{eqnarray*}
and this proves (P1). For (P2), observe that if $X=\nabla f$ then $x_i=\langle\nabla f,\partial_i\rangle=\partial_i f$ is independent on $t$. For (P3), it suffices to note that $\nu_i=\frac{1}{|\nabla f|}\langle\nabla f,\partial_i\rangle$ which implies $\nu_i'=\frac{1}{2}\frac{1}{|\nabla f|}H(\nu,\nu)\partial_if$ so that $\langle\dot{\nu},\nabla l\rangle=\frac{1}{2}H(\nu,\nu)\langle\nu,\nabla l\rangle.$

\section{Ingredients for the Neumann Problem}
Throughout this section $t\mapsto g(t)$ stands for a smooth variation of $g$.

\begin{lemma}\label{lem2}
If $\nu$ is the exterior normal field at $\partial M$, then
\begin{equation*}\label{dnu}
\frac{d}{dt}\Big|_{t=0}\nu(t)=-H(\nu)+\frac{1}{2}H(\nu,\nu)\nu.
\end{equation*}
\end{lemma}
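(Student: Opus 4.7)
My approach is to use a local representation of the boundary as a level set so that $\nu(t)$ becomes a normalized gradient, and then to differentiate this expression directly in coordinates. Near any point of $\partial M$ one can choose a defining function $f$ (so that $\partial M=f^{-1}(0)$ locally and $df\neq 0$ there); the outward unit normal with respect to $g(t)$ is then
\[
\nu(t)=\frac{\nabla_t f}{|\nabla_t f|_{g(t)}}.
\]

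The first step would be to compute the covariant components $\nu_i(t)=\partial_i f/|\nabla_t f|_{g(t)}$, which are tractable because the partial derivatives $\partial_i f$ are $t$-independent. Differentiating $|\nabla_t f|_{g(t)}^2=g^{pq}(t)\partial_p f\,\partial_q f$ and using the standard identity $\frac{d}{dt}g^{pq}\big|_{t=0}=-g^{pa}H_{ab}g^{bq}$ yields $\nu_i'(0)=\tfrac{1}{2}H(\nu,\nu)\nu_i$---essentially the same calculation already performed in the justification of property (P3) at the end of the preceding section.

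The second step is to pass from the covariant components to the actual vector field $\nu(t)^j=g^{ij}(t)\nu_i(t)$ via the product rule at $t=0$. The term $(\tfrac{d}{dt}g^{ij})\nu_i$ contributes exactly $-H(\nu)^j$---using the symmetry of $H$ and the canonical identification of the $(0,2)$-tensor $H$ with the associated $(1,1)$-tensor $X\mapsto H(X)$---while $g^{ij}\nu_i'(0)$ contributes $\tfrac{1}{2}H(\nu,\nu)\nu^j$. Summing the two gives the claimed formula.

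The point requiring the most care is the distinction between the total derivative $\frac{d}{dt}\nu(t)$ of $\nu$ viewed as a time-dependent section of $TM|_{\partial M}$ (which is what the lemma asserts) and the ``$\dot\nu$'' of property (P1), whose definition freezes the inverse metric $g^{ij}$. The gap between the two is precisely the $-H(\nu)$ contribution from differentiating $g^{ij}$, which in (P1) is already absorbed into the $-H(X,Y)$ term on the right-hand side; omitting it would retain only the normal piece $\tfrac{1}{2}H(\nu,\nu)\nu$ and lose the tangential component, producing the wrong answer.
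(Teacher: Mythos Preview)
Your proof is correct and follows essentially the same approach as the paper: both represent $\nu(t)$ as the normalized $g(t)$-gradient of a boundary-defining function and differentiate in local coordinates using $\tfrac{d}{dt}g^{ij}=-g^{ia}H_{ab}g^{bj}$. The only cosmetic difference is that the paper differentiates the contravariant vector $\nabla_t f$ first and then applies the quotient rule, whereas you differentiate the covariant components $\nu_i$ first and then raise the index; your closing remark distinguishing $\tfrac{d}{dt}\nu(t)$ from the paper's $\dot\nu$ is a useful clarification that the paper leaves implicit.
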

\begin{proof}
Let $f$ be a smooth function on $M$ such that $\nu(t)=\frac{\nabla_t f}{|\nabla_t f|}.$ Note that
\begin{eqnarray*}
\nonumber\frac{d}{dt}\nabla_tf &=&  -g^{ik}g^{js}H(\partial_k,\partial_s)\partial_if\partial_j = -g^{js}H(\nabla_tf,\partial_s)\partial_j\\
&=& -g^{js}\langle H(\nabla_tf),\partial_s\rangle\partial_j = -H(\nabla_tf).
\end{eqnarray*}
Hence, we have
\begin{equation*}
\frac{d}{dt}\Big|_{t=0}\nu(t)= -\frac{1}{2|\nabla f|^3}\langle-H(\nabla f),\nabla f\rangle\nabla f-\frac{1}{|\nabla f|}H(\nabla f) = -H(\nu)+\frac{1}{2}H(\nu,\nu)\nu.
\end{equation*}
\end{proof}

Our next ingredient is an integral formulae, which provides a generalization of Equation~$3.3$ due to Berger~\cite{Berger}.
\begin{lemma}\label{prop3}
The following holds for any $f,l\in C^{\infty}(M).$
\begin{equation*}
\int_{M}l\Delta'f \dM =\int_M l\Big(\frac{1}{2}\langle dh,df\rangle- \langle div H,df\rangle-\langle H,\nabla^2f\rangle\Big)\dM,
\end{equation*}
where $\Delta':=\frac{d}{dt}\big|_{t=0}\Delta_{g(t)}.$
\end{lemma}

\begin{proof}
By Stokes's Theorem
\begin{eqnarray*}
\int_{M}l\Delta_{g(t)}f \dMt=-\int_{M}\langle df,dl\rangle \dMt + \int_{\partial M}l\langle\nu_t,\nabla_t f\rangle \dnt.
\end{eqnarray*}
Applying properties (P2) and (P3), at $t=0$
\begin{eqnarray*}
\int_Ml\Delta'f \dM + \frac{1}{2}\int_Mlh\Delta f \dM&=&\int_MH(\nabla f,\nabla l)\dM - \frac{1}{2}\int_Mh\langle df,dl\rangle \dM\\
&&+\int_{\partial M}l\Big(-H(\nu,\nabla f)+\frac{1}{2}H(\nu,\nu)\frac{\partial f}{\partial\nu}\Big)\dn\\
&&+\int_{\partial M}l\frac{\tilde{h}}{2}\langle\nu,\nabla f\rangle \dn.
\end{eqnarray*}
Rearranging this latter equation, we have
\begin{eqnarray}\label{eq2lem1}
\int_Ml\Delta'f \dM &=&\int_MH(\nabla f,\nabla l)\dM-\int_{\partial M}lH(\nu,\nabla f)\dn \\
\nonumber&&-\frac{1}{2}\int_M\big(h\langle df,dl\rangle+lh\Delta f\big) \dM+\frac{1}{2}\int_{\partial M}l\big(\tilde{h}+H(\nu,\nu)\big)\frac{\partial f}{\partial\nu} \dn.
\end{eqnarray}
Letting $T=H$, $\varphi=l$ and $Z=\nabla f$ in Lemma~\ref{lem1}, we get
\begin{eqnarray}\label{eq3lem1}
\int_{\partial M} lH(\nu,\nabla f)\dn&=&\int_M div(H(l\nabla f))\dM\\
\nonumber&=&\int_Ml\big(\langle div H,df\rangle+\langle H,\nabla^2f\rangle\big)\dM + \int_MH(\nabla f,\nabla l)\dM.
\end{eqnarray}
Moreover,
\begin{equation}\label{eq4lem1}
\int_{\partial M}lh\langle\nu,\nabla f\rangle \dn=\int_M\big(lh\Delta f+h\langle df,dl\rangle\big)\dM+\int_Ml\langle df,dh\rangle \dM.
\end{equation}
Inserting \eqref{eq3lem1} and \eqref{eq4lem1} into \eqref{eq2lem1}, we obtain
\begin{eqnarray*}
\int_{M}l\Delta'f \dM &=&\int_M l\Big(\frac{1}{2}\langle dh,df\rangle- \langle div H,df\rangle-\langle H,\nabla^2f\rangle\Big)\dM \\
&& + \frac{1}{2}\int_{\partial M}l\big(-h + H(\nu,\nu)+\tilde{h}\big)\frac{\partial f}{\partial\nu} \dn,
\end{eqnarray*}
for all $l\in C^{\infty}(M)$, which is sufficient to complete the proof of the lemma, since $\tilde{h}=\mathrm{tr}_g(H|_{\partial M})=h - H(\nu,\nu)$.
\end{proof}

In what follows, we obtain Hadamard type formulas for the eigenvalues of the Laplace-Neumann operator.

\begin{lemma}\label{pro1}
Let  $\{\phi_{i}(t)\}\subset C^{\infty}(M)$ be a differentiable family of functions
and $\lambda(t)$ a differentiable family of real numbers such that $\langle\phi_{i}(t),\phi_{j}(t)\rangle_{L^2(M,\dMt)}=\delta_i^j$  for all $t$ and
$$\left\{
  \begin{array}{ccccccc}
    -\Delta_{g(t)}\phi_{i}(t) &=& \lambda(t)\phi_{i}(t) & \hbox{in} \,\,\;M\\
    \frac{\partial}{\partial\nu_t}\phi_{i}(t)&=&0 & \hbox{on}\,\,\; \partial M.
  \end{array}
\right.$$
Then,
\begin{equation}
\lambda'(0)\delta_i^j=\int_{M}\langle\frac{1}{4}\Delta(\phi_i\phi_j)g-d\phi_i\otimes d\phi_j, H\rangle \dM.
\end{equation}
\end{lemma}

\begin{proof}
Taking the derivative with respect to $t$ at $t=0$ in both sides of the equation $-\Delta_{g(t)} \phi_{i}(t)=\lambda(t)\phi_{i}(t)$, we have $-\Delta'\phi_i-\Delta\phi'_i=\lambda'\phi_i+\lambda\phi'_i$. Thus
\begin{equation*}
-\int_{M}(\phi_j\Delta'\phi_i+\phi_j\Delta\phi'_i)\dM
=\int_{M}(\lambda'\phi_j\phi_i-\phi_i'\Delta\phi_j)\dM.
\end{equation*}
Since $\langle\nu_t,\nabla_t\phi_{i}(t) \rangle=\frac{\partial}{\partial\nu_t}\phi_{i}(t)=0$ on $\partial M$, we deduce that
\begin{eqnarray*}\langle\nu,\nabla\phi'_i\rangle=H(\nu,\nabla\phi_i)-\frac{1}{2}H(\nu,\nu)\langle\nu,\nabla\phi_i\rangle=H(\nu,\nabla\phi_i)\quad \text{at } t=0.
\end{eqnarray*}
Integration by parts gives
\begin{eqnarray*}
\lambda'\delta_i^j &=& - \int_{M}\phi_j\Delta'\phi_i \dM - \int_{\partial M}\phi_j\frac{\partial}{\partial\nu}\phi'_i\dn\\
 &=& - \int_{M}\phi_j\Delta'\phi_i \dM - \int_{\partial M}\phi_jH(\nu,\nabla\phi_i)\dn.
\end{eqnarray*}
Whence,
\begin{eqnarray*}
-2\lambda'\delta_i^j &=& \int_{M}\phi_j\Delta'\phi_i \dM + \int_{M}\phi_i\Delta'\phi_j \dM + \int_{\partial M}\phi_iH(\nu,\nabla\phi_j)\dn \\
&&+ \int_{\partial M}\phi_jH(\nu,\nabla\phi_i)\dn\\
&=&\int_{M}\langle \frac{1}{2}dh-div H,\phi_jd\phi_i + \phi_id\phi_j \rangle \dM - \int_{M}\langle H,\phi_j\nabla^2\phi_i + \phi_i\nabla^2\phi_j\rangle \dM\\
&&+ \int_{\partial M}\phi_iH(\nu,\nabla\phi_j)\dn + \int_{\partial M}\phi_jH(\nu,\nabla\phi_i)\dn\\
&=&\int_{M}\langle \frac{1}{2}dh ,d(\phi_i\phi_j)\rangle \dM - \int_{M}\phi_j\big(\langle div H, d\phi_i\rangle+\langle H,\nabla^2\phi_i\rangle\big) \dM\\
&&+\int_{\partial M}\phi_jH(\nu,\nabla\phi_i)\dn -\int_{M}\phi_i\big(\langle div H,d\phi_j\rangle+\langle H,\nabla^2\phi_j\rangle\big) \dM\\
&&+ \int_{\partial M}\phi_iH(\nu,\nabla\phi_j)\dn.
\end{eqnarray*}
Next use divergence's theorem together with Lemma~\ref{lem1} to deduce
\begin{eqnarray*}
-2\lambda'\delta_i^j &=& - \int_{M}\frac{h}{2}\Delta(\phi_i\phi_j)\dM + 2 \int_{M} H(\nabla\phi_i,\nabla\phi_j) \dM
\end{eqnarray*}
or equivalently
\begin{equation*}
\lambda'\delta_i^j = \int_{M}\big\langle\frac{1}{4}\Delta(\phi_i\phi_j)g - d\phi_i\otimes d\phi_j, H\big\rangle \dM.
\end{equation*}
\end{proof}

\section{The existence result}\label{sec existence}
We now describe the details of the proof of Proposition~\ref{thmExistence}. Here, we use the Liapunov-Schmidt method, along the same lines as done by Henry~\cite{henry}, and continued by Marrocos and Pereira~\cite{Marrocos}.

Specifically, we consider the Neumann-problem:
\begin{equation}\label{plrp}
\left\{\begin{array}{lccc}
(\Delta_t+ \lambda)u = 0& in \ \ M\\
\frac{\partial u}{\partial \nu_t} =0 & \ \ on\ \ \partial M,
\end{array}\right.
\end{equation}
where $(M,g_0)$ is an orientable, compact $n$-dimensional Riemannian manifold with boundary $\partial M$, $\Delta_t:=\Delta_{g(t)}$, $t\mapsto g(t)$ is an analytic variation of $g_0$, with $g(0)=g_0$, and $\nu_t$ is a one-parameter family of unit exterior vectors along with $(\partial M,g(t))$.

\begin{lemma}\label{tcaln}
Let $\lambda_0$ be an eigenvalue of the Laplace-Neumann operator of multiplicity $m\geq2$. Then for every $\epsilon>0$ there is $\delta>0$ so that for each $|t|<\delta$, there exist exactly $m$ eigenvalues (computing their multiplicities) to the problem {\it\eqref{plrp}} in the interval $(\lambda_0-\epsilon,\lambda_0+\epsilon)$.
\end{lemma}

\begin{proof}
Let $\{\phi_j\}_{j=1}^m$ be an orthonormal basis associated with $\lambda_0$, and let
\begin{equation*}
Pu=\sum_{j=1}^m\phi_j\int_M\phi_j u \dg
\end{equation*}
be the orthonormal projection on the corresponding eigenspace. As it is well-known, $P$ induces a splitting $L^2(M,\dM_{0})={\mathcal{R}}(P)\oplus\mathcal{N}(P)$ so that any function $u$ in $L^2(M,\dM_0)$ can be written as $u=\phi+\psi$, where  $\phi\in\mathcal{R}(P)=ker(\Delta+\lambda_0)$ and $\psi\in \mathcal{N}(P)$.

With this in mind, the Neumann-problem can be equivalently viewed as a system of equations, as to the system:
\begin{equation}\label{deqlr}
\left\{\begin{array}{ccccccc}
(I-P)(\Delta_t + \lambda)(\phi+\psi)&=0&      &\text{in }  M\\
P(\Delta_t+ \lambda)(\phi+\psi)&= 0&           &\text{in }  M\\
\frac{\partial }{\partial \nu_t}(\phi+\psi)&=0&    &\,\,\,\,\,\,\text{on } \partial M.
\end{array}\right.
\end{equation}

To solve it, we first observe that since $\phi_j$ and $\psi$ are orthonormal, by the divergence theorem we must have
\begin{equation*}
P(\Delta + \lambda)\psi  =  \sum_{j=1}^m\phi_j\int_{M}\phi_j(\Delta + \lambda)\psi\dg
 = \sum_{j=1}^m\phi_j\int_{\partial M}\phi_j \frac{\partial\psi}{\partial \nu} \db
\end{equation*}
which implies
\begin{equation*}
(\Delta+\lambda)\psi  =  (I-P)\big((\Delta+\lambda)\psi\big)+\displaystyle\sum_{j=1}^m\phi_j\int_{\partial M}\phi_j \frac{\partial\psi}{\partial \nu} \db.
\end{equation*}
Thus, we get
\begin{equation*}
(\Delta+ \lambda)\psi+ (I-P)(\Delta_t - \Delta)(\phi+\psi)-\sum_{j=1}^m\phi_j\int_{\partial M}\phi_j\frac{\partial \psi}{\partial \nu}\db =0.
\end{equation*}
Moreover, the part concerning the boundary in \eqref{deqlr} can be rewritten as
\begin{equation*}
\frac{\partial \psi }{\partial \nu} +\left(\frac{\partial }{\partial \nu_t} - \frac{\partial}{\partial \nu} \right)(\phi+\psi)=0.
\end{equation*}
Hence solving the first and third equations of \eqref{deqlr}, is equivalent to finding the zeros of the application
\begin{eqnarray*}
F :  \mathbb{R}\times\mathbb{R}\times\mathcal{R}(P)\times H^2(M)\cap\mathcal{N}(P)&\longrightarrow &\mathcal{N}(P)\times H^{\frac{3}{2}}(M)\\
(t,\lambda,\phi,\psi)&\mapsto&\big(F_1(t,\lambda,\phi,\psi),F_2(t,\lambda,\phi,\psi)\big),
\end{eqnarray*}
where
\begin{equation*}
\left\{\begin{array}{lcc}
F_1=(\Delta + \lambda)\psi+ (I-P)(\Delta_t - \Delta)(\phi+\psi)-\displaystyle\sum_{j=1}^m\phi_j\int_{\partial M}\phi_j\frac{\partial\psi}{\partial \nu}\db\\
F_2 = \frac{\partial \psi}{\partial \nu} + \big(\frac{\partial }{\partial \nu_t} - \frac{\partial}{\partial \nu}\big)(\phi+\psi).
\end{array}\right.
\end{equation*}

Note that $F$ depends differentially on the variables $\lambda$, $t$, $\psi$ e $\phi$. Our intent is to use the Implicit Function Theorem to show that $F(t,\lambda,\phi,\psi)=(0,0)$ admits a solution $\psi$ as function of $\lambda$, $t$ and $\phi$. To this end, we observe that if $t = 0, \lambda=\lambda_0,\psi=0$ then
\begin{equation}\label{TAI}
\frac{\partial F}{\partial \psi}(0,\lambda_0 , 0 , 0 ) \dot{\psi} = \Big((\Delta + \lambda_0) \dot{\psi} - \sum_{j=1}^m \phi_j \int_{\partial M}\phi_j\frac{\partial \dot{\psi}}{\partial \nu}\db \, ,\, \frac{\partial \dot{\psi} }{\partial \nu}\Big).
\end{equation}
We claim now that the map given in \eqref{TAI} is an isomorphism from $H^2(M)\cap\mathcal{N}(P)$ onto $\mathcal{N}(P)\times H^{\frac{3}{2}}(M)$. Indeed, the proof of this fact can be found in~\cite{lion}.

Hence, by Implicit Function Theorem there exist positive numbers $\delta$, $\epsilon$ and a function $S(t , \lambda)\phi$ of class $C^1$ at the variables $(t , \lambda)$ such that for every $|t| < \delta$ and  $\lambda \in (\lambda_0-\epsilon, \lambda_0+\epsilon)$, $F(t ,\lambda,\phi, S(t,\lambda)\phi)=(0,0)$. Furthermore, $S(t , \lambda)\phi$ is analytic at $\lambda$ and linear at $\phi$. This solves equation~\eqref{deqlr} in relation to $\psi$.

We now observe that for every  $\phi\in\mathcal{R}(P)$ there exist real numbers $c_1,c_2,\dots,c_m$ so that $\phi = \sum_{j=1}^{m}c_j\phi_j$. Thus, the second equation in~\eqref{deqlr} can be equivalently seen as a system of equations on the variables  $c_1,\dots,c_m$ as below
\begin{equation*}
\sum_{j=1}^m c_j\int_{M}\phi_k (\Delta_t + \lambda)(\phi_j+S(t,\lambda)\phi_j)\dg=0,\quad k=1,2,\dots, m.
\end{equation*}
In this way, $\lambda$ is an eigenvalue of $\Delta_t$ if and only if $\det A(t,\lambda)=0$, where $A(t,\lambda)$ is given by
\begin{equation*}
A_{kj}(t,\lambda)=\int_{M}\phi_k(\Delta_t + \lambda) (\phi_j+S(t,\lambda)\phi_j)\dg.
\end{equation*}
Furthermore, the associated eigenfunctions are given by
\begin{equation*}
u(t, \lambda)=\sum_{j=1}^{m}c_j(\phi_j+S(t,\lambda)\phi_j).
\end{equation*}
In other words, $c=(c_1,\ldots,c_m)$ must to satisfy $A(t,\lambda) c=0$. It turns out that by Rouch\'e Theorem, we have that: For every $\epsilon>0$ there is $\delta > 0$ so that if $|t - t_0| < \delta $, then there exist exactly $m$-roots of $\det A(t ,\lambda)=0$ in $(\lambda_0-\epsilon,\lambda_0+\epsilon)$.
\end{proof}

It is worth mentioning that the above proof still does not ensure the existence of an analytic curve of eigenvalues for (\ref{plrp}). However, the next result goes in this direction.

\begin{proposition}\label{thmExistence}
Let $M^n$, $n\geq2$, be a compact oriented smooth manifold, and let $g(t)$ be a real analytic one-parameter family of Riemannian metrics on $M$ with $g(0)=g_0$. Assume $\lambda$ is an eigenvalue of multiplicity $m$ for the Laplace-Neumann operator $\Delta_{g_0}$. Then there exist $\varepsilon>0$ and $t$-analytic functions $\lambda_{i}(t)$ and $\phi_{i}(t)$, ($i=1,\dots,m$) such that $\langle\phi_{i}(t),\phi_{j}(t)\rangle_{L^2(M,\dMt)}=\delta_i^j$ and the following relations hold for every $|t|<\varepsilon$:
\begin{enumerate}
\item[(i)] $\Delta_{g(t)}\phi_{i}(t)=\lambda_{i}(t)\phi_{i}(t)$ in $M$,
\item[(ii)] $\frac{\partial}{\partial\nu_t}\phi_i(t)=0$ on $\partial M$,
\item[(iii)] $\lambda_{i}(0)=\lambda$.
\end{enumerate}
\end{proposition}

\begin{proof}
Assume the same conditions of Lemma~\ref{tcaln}. We must show that there exist $m$-analytic curves of eigenvalues $\lambda_j(t)$ for \eqref{plrp} associated to $m$-analytic curves eigenfunctions  $\phi_j(t)$.

The proof strategy is reducing the problem to a finite-dimension analogous one and applying Kato's Selection theorem, see~\cite{Kato}. For this, we will make a slightly different construction than that made in previous proposition.

In order to overcome this drawback, we slightly will change the previous proof of such a way that the new obtained matrix will come to be symmetric.

Let $\{\phi_j\}_{j=1}^m$ be orthonormal eigenfunctions of the Laplace-Neumann associated with $\lambda_0$. For each $j=1,\ldots, m$ consider the following problem:
\begin{equation}\label{pfp}
\left\{\begin{array}{ccccccc}
(\Delta +\lambda_0)u&=0& &\text{in }  M\\
\frac{\partial }{\partial \nu_t}(\phi_j+ u)&=0&   &\,\,\,\,\text{on } \partial M\\
Pu=\displaystyle\sum_{j=1}^m\phi_j\int_{M}\phi_ju\dg&=0& &\,\,\text{in } M.
\end{array}\right.
\end{equation}

Consider now the orthogonal complement $[\phi_j]^{\bot}$ of $ker(\Delta+\lambda_0)$ in $L^2(M,\dM_0)$ and define
\begin{equation*}
F: (-\delta, \delta)\times H^2(M,\dM_0)\longrightarrow [\phi_j]^{\bot}\times\mathcal{R}(P)\times H^{\frac{3}{2}}(M,\dM_0)
\end{equation*}
by
\begin{equation*}
F(t,w)=\big((\Delta+\lambda_0)w,\, Pw,\, \frac{\partial }{\partial \nu_t} (\phi_j+w)\big).
\end{equation*}
Exactly as before we get that $\frac{\partial F}{\partial w}(0,0)$ is an isomorphism, so by Implicit Function Theorem there exist $\delta > 0$ and an analytic function $w_j(t)$ defined on $|t - t_0| < \delta$ such that $F(t,w_j(t)) = 0$. In addition, we obtain for each $|t - t_0| < \delta$ a linearly independent set of functions $\{\varphi_j(t)\}_{j=1}^m$, given by $\varphi_j(t)=\phi_j+w_j(t)$,
that satisfy equation~\eqref{pfp}. By using the Gram-Schmidt orthonormalization process with respect to the inner product
\begin{equation*}
(u,v):=\int_{M}uv\, \dMt,
\end{equation*}
we can without loss of generality assume  that $\{\varphi_j(t)\}_{j=1}^m$ is biorthogonal. Note that the functions $\varphi_j(t)$ belong to $D_t=\{u \in H^{2}(M,\dM_0), \frac{\partial u}{\partial \nu_{t}} = 0\}$. Moreover, since $\Delta_t$ is selfadjoint with respect to the inner product defined above, it follows that the matrix $\int_{M}\varphi_j \Delta_t \varphi_k \dMt$ is symmetric.

For a given $T\in \mathcal{S}^{2,k}$, we define a family of Riemannian metric on $M$ by $g(t) = g_0 +tT$, and let $P(t)$ be given by
\begin{equation*}
P(t)u=\sum_{j=1}^m\varphi_j(t)\int_{M}u\varphi_j(t)\dMt.
\end{equation*}
We finally define for each $j=1,\dots, m$,
\begin{eqnarray*}
 G_j :  (-\epsilon, \epsilon)\times\mathbb{R}\times H^2(M)&\longrightarrow & [\phi_j]^{\bot}\times H^{\frac{3}{2}}(M)\times \mathcal{R}(P)\\
 (t,\lambda,w)&\mapsto & \big(G_{j1}(t,\lambda,w),G_{j2}(t,\lambda,w),G_{j3}(t,\lambda,w)\big)
\end{eqnarray*}
by
\begin{equation*}
\left\{\begin{array}{lcc}
G_{j1}=(I-P(t))((\Delta_t + \lambda))(w+\varphi_j(t))\\
G_{j2}=\frac{\partial }{\partial \nu_t}w;\\
G_{j3}= P(t)w.\end{array}\right.
\end{equation*}
Once again, Implicit Function Theorem provide a number   $\delta>0$ and functions $w_j(t,\lambda)$ such that for any $|t- t_0|<\delta$ and every $|\lambda-\lambda_0|<\delta$, the equality $G_j(t,\lambda,w_j(t,\lambda))=(0,0,0)$ holds. As we know,  $\lambda$  is an eigenvalue for \eqref{plrp} iff there exists a nonzero $m$-tuple $c=(c_1,\ldots,c_m)$ of real numbers such that $A(t,\lambda)c=0$, where
\begin{equation}
A_{ij}(t,\lambda)= \int_{M}\varphi_i(t)(\Delta_t+\lambda)(\varphi_j(t)+w_j(t,\lambda))\dMt.
\end{equation}
That is, $\lambda$ is an eigenvalue of \eqref{plrp} iff $\det A(t,\lambda)=0$. By Rouch\'e's Theorem, there exist $m$ roots near $\lambda_0$ counting multiplicity, for each $t$. So, by Puiseux's Theorem \cite{wall} there exist $m$-analytic functions $t\to \lambda_i(t)$ which locally solve the equation $\det A(t,\lambda)=0$. It can be easily seen that  $A$ is symmetric and hence, by Kato's Selection theorem \cite{Kato}, we can find an analytic curve $c^i(t)\in\mathbb{R}^m$ such that $A(t,\lambda_i(t))c^i(t)=0$, for each $i=1,\dots,m$. Thus $\psi_i(t)=\sum_{j=1}^m c^i_j(t)(\varphi_j+\omega_j(t,\lambda_i(t)))$ is an analytic curve of eigenfunctions for \eqref{plrp} associated with $\lambda_i(t)$. Now, reasoning exactly as Kato in \cite[p. 98]{Kato} we can obtain $m$-analytic curves of eigenvalues $\{\phi_i(t)\}_{i=1}^m$ such that $\int_{M}\phi_i(t)\phi_j(t)\dMt=\delta_i^j$.
\end{proof}

\begin{remark}
In the special case $m=m(\lambda_0)=1$, the existence of a differentiable curve of eigenvalues through $\lambda_0$ follows from Implicit Function Theorem applied to the map $F:S^k\times H^2(M,\dM_0)\times \mathbb{R}\rightarrow L^2(M, \dM_0)\times \mathbb{R}$ defined by
\begin{equation*}
F(g, u , \lambda)= ((\Delta_g + \lambda)u,\int_{M}u^2\dg).
\end{equation*}
The corresponding formulae to the derivative $\lambda'(t)$ can be obtained by letting $i=j=1$ in Lemma~\ref{pro1}.
\end{remark}

\subsection{Proof of Theorem~\ref{thm1}}
\begin{proof} We argue by contradiction. Suppose that there is an open neighborhood $\mathcal{U}\subset \mathcal{V}_\epsilon$ of $g_0$ such that for all $g\in \mathcal{U}$ the eigenvalue $\lambda(g)$ of $\Delta_{g}$ with $|\lambda(g)-\lambda|< \epsilon_{\lambda,g_0}$ has multiplicity $m\geq2$. In this case, for any symmetric $(0,2)$--tensor $T$ on $M$ the perturbation $g(t)=g_0+tT$ fails to split the eigenvalue $\lambda$. In this case, by Lemma~\ref{tcaln} $\lambda(t)$ is only eigenvalues $\epsilon$-close to $\lambda$. By Lemma~\ref{pro1}
\begin{equation}
\lambda'\delta_i^j = \int_{M}\big\langle\frac{1}{4}\Delta(\phi_i\phi_j)g_0 - d\phi_i\otimes d\phi_j, T\big\rangle \dg.
\end{equation}
Now consider the symmetrization tensor
\begin{equation}\label{Symm}
S_{ij}=\frac{1}{2}(d\phi_i\otimes d\phi_j + d\phi_j\otimes d\phi_i)
\end{equation}
and use the fact that $\langle d\phi_i\otimes d\phi_j, T\rangle=\langle d\phi_j\otimes d\phi_i,T \rangle$ to deduce
\begin{equation}
\lambda'\delta_i^j = \int_{M}\big\langle\frac{1}{4}\Delta(\phi_i\phi_j)g_0 - S_{ij}, T\big\rangle \dg.
\end{equation}
If $i\neq j$, we have
\begin{equation*}
\int_{M}\big\langle\frac{1}{4}\Delta(\phi_i\phi_j)g_0 - S, T\big\rangle \dg=0,
\end{equation*}
for all $T\in\mathcal{S}^{2,k}$. Thus
\begin{equation}\label{eq S}
\frac{1}{4}\Delta(\phi_{i}\phi_{j})g_0= S_{ij}.
\end{equation}
By taking the trace in~\eqref{eq S}, we obtain
\begin{eqnarray*}
\langle\nabla\phi_{i},\nabla\phi_{j}\rangle &=& \frac{n}{4}\Delta(\phi_{i}\phi_{j}) = \frac{n}{4}\big(\phi_{i}\Delta\phi_{j}+\phi_{j}\Delta\phi_{i}+2\langle\nabla\phi_{i},\nabla\phi_{j}\rangle\big)\\
&=&\frac{n}{2}(-\lambda\phi_i\phi_j + \langle\nabla\phi_i,\nabla\phi_j\rangle).
\end{eqnarray*}
Thus, for $n=2,$ it follows from the unique continuation principle \cite{hormander} that at least one eigenfunction would be vanish, which is a contradiction. Thus $\lambda(t)$ is simple.
For $n\geq3$ we can write
\begin{equation}\frac{n\lambda}{n-2}\phi_{i}\phi_{j}=\langle\nabla\phi_{i},\nabla\phi_{j}\rangle.
\end{equation}
By reasoning exactly as Uhlenbeck~\cite{Uhlenbeck}, for each  fixed $p\in M$ we consider $\alpha$ the integral curve in $M$ such that $\alpha(0)=p$ and $\alpha'(s)=\nabla\phi_i(\alpha(s))$. Define $\beta(s):=\phi_j(\alpha(s))$, then
\begin{eqnarray*}
\beta'(s)&=&\langle \nabla\phi_j(\alpha(s)),\alpha'(s)\rangle =\langle \nabla\phi_j,\nabla\phi_i \rangle(\alpha(s))\\
&=&\frac{n\lambda}{n-2}\phi_{i}\phi_{j}(\alpha(s)) = \frac{n\lambda}{n-2}\phi_{i}(\alpha(s))\beta(s)
\end{eqnarray*}
which is a contradiction since $M$ is compact, thereby proving the theorem.
\end{proof}
\subsection{Proof of Corollary~\ref{CR}}
\begin{proof}
Recall that a residual set is a countable intersection of open dense sets. We note that $\mathcal{D}= \cap_{j}\mathcal{D}_j$, where
\begin{equation*}
\mathcal{D}_j = \{ g\in\mathcal{M}^k : \mbox{all eigenvalues}\, \lambda \leq j  \,\,\mbox{of}\, \, \Delta_g  \,\, \mbox{are simple}\}.
\end{equation*}
Hence we need  to prove that each $\mathcal{D}_j$ are open and dense. The openness is quite straightforward, and it follows directly from Lemma~\ref{tcaln}, while the denseness is a consequence of Theorem~\ref{thm1}.
\end{proof}

\section{Proof of Theorem~\ref{thmManifold}}\label{SectionProofThm2}
In this section, we consider the weak form of the Laplace-Neumann operator\footnote{This will be crucial to apply the Teytel approach's since the domain of the Laplace Neumann operator $\Delta_g$ changes when we perturb the metric $g$. }. It can be defined as  the unique self-adjoint operator $\Delta_g$ on $H^1(M,\dM)$ whose associated bilinear form is $\alpha(u,v)=\int_{M}g(\nabla u, \nabla v)\dM$. In this case, we have
\begin{equation*}
\int_{M}u\Delta_g v \dM= \alpha(u,v).
\end{equation*}

The proof of Theorem~\ref{thmManifold} will be based on the following result due to Teytel~\cite{teytel}. \emph{Let $A(q)$ be a differentiable family of self-adjoint operators on a real Hilbert space $H$, indexed by a parameter q that belongs to a separable Banach manifold $\mathcal{X}$. Assume that the spectrum of each operator $A(q)$ is discrete, of finite multiplicity, and with no finite accumulation points. Assume also that the family $A(q)$ satisfies SAH2\footnote{See Remark~\ref{SAH2} for the definition of the condition SAH2.}. Then the set of all $q$ such that $A(q)$ has a repeated eigenvalue has meager codimension $2$ in $\mathcal{X}$.}

We need to choose a family $A(g)$ of self-adjoint operators with respect to a fixed inner product such that the Laplace-Neumann operator $\Delta_g$ is unitary equivalent to $A(g)$ for each $g\in \mathcal{M}^k$. For this, let us define the isometry operator $\mathcal{P}: L^2(M,\dg)\rightarrow L^2(M,\dM)$ given by
\begin{equation}
\mathcal{P}(u)= \frac{\sqrt[4]{|g_0|}}{\sqrt[4]{|g|}}u,
\end{equation}
where $|g|=\det(g_{ij})$. Then $\int_{M}\mathcal{P}u\mathcal{P}v\dM=\int_{M}uv\dg$, and thus the operator $A(g):= \mathcal{P}^{-1}\Delta_g\mathcal{P}$ has the same eigenvalues of $\Delta_g$. Moreover, it is self-adjoint with respect to a fixed inner product. Indeed, by density we can consider $u,v\in C^2(M)$ with boundary condition $\frac{\partial u}{\partial \nu}=\frac{\partial v}{\partial \nu}=0$, so that
\begin{eqnarray*}
\int_{M}vA(g)u\dg &\stackrel{(isom.)}{=}&\int_{M}\mathcal{P}v\Delta_g\mathcal{P}u\dM=\int_{M}\mathcal{P}u\Delta_g\mathcal{P}v\dM\\
&\stackrel{(isom.)}{=}&\int_{M}\mathcal{P}^{-1}\mathcal{P}u\mathcal{P}^{-1}\Delta_g\mathcal{P}v\dM=\int_{M}uA(g)v\dg.
\end{eqnarray*}
Furthermore, we choose $t$ small enough that $g(t)=g_0+ tT\in \mathcal{M}^k$, for a symmetric $(0,2)$--tensor $T$. So, we get
\begin{equation*}
A(g_0+tT)=A(g_0)+tA^{(1)}(g_0,T)+o(t),
\end{equation*}
where $A^{(1)}(g_0,T)=-\mathcal{P}'\Delta_{g_0}+\Delta_{g_0}\mathcal{P}'+\Delta'$ and $\mathcal{P}'=\frac{d}{dt}\big|_{t=0}\frac{\sqrt[4]{|g_0|}}{\sqrt[4]{|g_0+tT|}}.$
To complete Teytel's approach, let $\lambda$ be an eigenvalue of $A(g)$ of multiplicity $m\geq2$, and let $\{\phi_i\}_{i=1}^m$ be the corresponding family of normalized
real-valued eigenfunctions. We define the functionals
\begin{equation}\label{fij}
f_{ij}(T) = \int_{M}\phi_i A^{(1)}(g_0,T)\phi_j\dg.
\end{equation}.
\begin{remark}\label{SAH2}
The family $A(g)$ satisfies $\mathrm{SAH2}$ if for any eigenvalue $\lambda$ of $A(g)$ of multiplicity $m\geq 2$ there exist two orthonormal eigenfunctions $\phi_1$ and $\phi_2$ associated to $\lambda$ such that the functionals $f_{11}-f_{22}$ and $f_{12}$ are linearly independent. This condition is equivalent that one given by Teytel~\cite{teytel} in an abstract setting.
\end{remark}

Now we are in a position to prove our next result.

\subsection{Proof of Theorem~\ref{thmManifold}}
\begin{proof}
For each $g\in\mathcal{M}^k$ the modified operators $A(g)$ is self-adjoint, and it has discrete spectrum, of finite multiplicity, and with no finite accumulation points.
In order to prove Items $(1)$ and $(2)$ it is enough to verify that the condition of Remark~\ref{SAH2} holds for~\eqref{fij}. However, we will prove a more stronger condition, namely $f_{11}, f_{12}, f_{22}$ are linearly independent. In this way, we also can prove Item $(3)$. We begin by noticing that
\begin{equation*}
\int_M A^{(1)}(g,T) \dg=\int_M \phi_i\Delta'\phi_j\dg.
\end{equation*}
Hence, functionals~\eqref{fij} are equivalent to
\begin{equation*}
f_{ij}(T) =\int_{M}\big\langle\frac{1}{4}\Delta(\phi_i\phi_j)g_0 - S_{ij}, T\big\rangle \dg,
\end{equation*}
where  $S_{ij}$ is given by \eqref{Symm}. Next, we note that $\alpha f_{11}+\beta f_{12} +\gamma f_{22}=0$ implies
\begin{equation}\label{linear combination}
\frac{1}{4}(\alpha\Delta\phi_1^2+\beta\Delta(\phi_1\phi_2)+\gamma\Delta\phi_2^2)g_0-(\alpha S_{11}+\beta S_{12}+\gamma S_{22})=0.
\end{equation}
By taking the trace in~\eqref{linear combination}, we obtain
\begin{equation*}
\alpha((n-2)|\nabla\phi_{1}|^2 - n\lambda\phi_{1}^2)+ \beta((n-2)\langle\nabla\phi_{1},\nabla\phi_{2}\rangle - n\lambda\phi_{1}\phi_{2})+
\gamma((n-2)|\nabla\phi_{2}|^2 - n\lambda\phi_{2}^2)=0
\end{equation*}
or equivalently
\begin{equation}\label{quadratic form}
\alpha B(\phi_1,\phi_1)+\beta B(\phi_1,\phi_2)+\gamma B(\phi_2,\phi_2)=0,
\end{equation}
where
\begin{equation*}
B(\phi_i,\phi_j)=\phi_i\phi_j \,\, \mbox{if}\,\, n=2,\,\,\,\mbox{and}\,\,\,\, B(\phi_i,\phi_j) = \langle\nabla\phi_{i},\nabla\phi_{j}\rangle - \frac{n\lambda}{n-2}\phi_{i}\phi_{j}\,\,\, \mbox{if}\,\, n>2.
\end{equation*}
Suppose that the functionals $f_{11},f_{12},f_{22}$ are linearly dependent. Hence, from equation~\eqref{quadratic form} we can obtain the linear combinations $\varphi=a\phi_1 +b \phi_2$ and $\psi= c\phi_1 + d\phi_2$, for some constants $a,b,c,d$, such that $\varphi$ and $\psi$ are linearly independent, and equation~\eqref{quadratic form} becomes
\begin{equation*}
c_1B(\varphi,\varphi)+c_2B(\psi,\psi)=0, \quad \mbox{where}\quad c_i=\pm 1.
\end{equation*}

For $n=2$ case, we have $\varphi^2=\pm\psi^2$ which implies $\varphi=\pm\psi$ in some open set in $M$. It follows from the unique continuation principle (see \cite{hormander}) that $\varphi=\pm\psi$ in $M$, which is a contradiction.

For $n> 2$ case, firstly we suppose that
\begin{equation*}
0= B(\varphi,\varphi)-B(\psi,\psi) = |\nabla\varphi|^2 - \frac{n\lambda}{n-2}\varphi^2-|\nabla\psi|^2 + \frac{n\lambda}{n-2}\psi^2.
\end{equation*}
By making $\eta=\varphi+\psi$ and $\xi=\varphi-\psi$, we have $\langle\nabla\eta,\nabla\xi\rangle - \frac{n\lambda}{n-2}\eta\xi=0$. Thus, we can argue as Theorem~\ref{thm1} to get $\eta=0$, which implies that $\varphi=-\psi$ in $M$, this is a contradiction. Next, we suppose that
\begin{equation*}
0=B(\varphi,\varphi)+B(\psi,\psi) = |\nabla\varphi|^2+|\nabla\psi|^2 - \frac{n\lambda}{n-2}(\varphi^2+\psi^2).
\end{equation*}
Since $\varphi\Delta\varphi=-\lambda\varphi^2$ and $\psi\Delta\psi=-\lambda\psi^2$, then integration by parts gives
\begin{equation*}
\frac{-2\lambda}{n-2}\int_{M}(\varphi^2+\psi^2)\dg = 0.
\end{equation*}
Once again, we obtain a contradiction. This complete the proof of Item $(1)$ of the theorem.

For Item $(2)$, we observe that any eigenvalue $\lambda$ of $\Delta_{g_0}$ of multiplicity $m=2$ must satisfy the strong Arnold hypothesis (SAH) relative to the family $A(g)$. Indeed, in this case, condition SAH is equivalent to SAH2 proved in Item $(1)$. Thereby, we can apply Theorem~1.1 in~\cite{teytel} to obtain the second part of the theorem.

For Item $(3)$, since we already proved that $f_{11},f_{12},f_{22}$ are linearly independent we can apply the Implicit Function Theorem as in the proof of Theorem $1.1$ in~\cite{teytel} to get required result.
\end{proof}

\begin{remark}
The proof of Theorem~\ref{thmManifold} also proves Corollary~\ref{CR} by an application of Theorem~C in~\cite{teytel}. Moreover, this technique does not work for the space of $C^\infty$~Riemannian metrics, since we need $\mathcal{M}^k$ be a separable Banach space in order to apply Teytel's approach.
\end{remark}

\section{Concluding Remarks}
Taking into account the existence of analogous ingredients for the $\eta$-Laplacian $L_g=\Delta_g-g(\nabla\eta,\nabla\cdot)$ with Dirichlet boundary condition (see \cite{GMR}), we can proceed as in the proof of Theorem~\ref{thmManifold} to prove the next result. In what follows $\Gamma$ stands for the set of all metrics $g\in\mathcal{M}^k$ such that the eigenvalues of $L_g$ are all simple.

\begin{theorem}
Let $M^n$, $n\geq2$, be a compact oriented smooth manifold:
\begin{enumerate}
  \item The set $\mathcal{M}^k\backslash\Gamma$ has meager codimension $2$ in $\mathcal{M}^k.$
  \item Take $g_0\in\mathcal{M}^k$, and let $\lambda$ be an eigenvalue of the $\eta$-Laplacian $L_{g_0}$ of multiplicity $2$. Then, in a neighborhood of $g_0$, the set of all $g\in\mathcal{M}^k$ such that $L_g$ admits an eigenvalue $\lambda(g)$ of multiplicity $2$ near $\lambda$ form a submanifold of meager codimension $2$ in $\mathcal{S}^{2,k}$.
  \item Consider the same set up as Item $(2)$. Then, in a neighborhood of $g_0$, the set of all $g\in\mathcal{M}^k$ which preserves double eigenvalues, i.e. $L_g$ admits an eigenvalue $\lambda(g)$ of multiplicity $2$ such that $\lambda(g)=\lambda(g_0)$, form a submanifold of meager codimension $3$ in $\mathcal{S}^{2,k}$.
\end{enumerate}
\end{theorem}

We point out that Item $(2)$ of the previous theorem generalize the Micheletti and Pistoia's result in~\cite{Micheletti}.

\vskip .3cm
\noindent\textbf{Acknowledgements:} The authors would like to express their sincere thanks to Cleon Barroso for useful comments, discussions and constant encouragement as well as to Department of Mathematics at Lehigh University, where part of this work was carried out. The first author is grateful to Huai-Dong Cao and Mary Ann for the warm hospitality and their constant encouragement.

\end{document}